\UseRawInputEncoding
\documentclass[12pt]{article}
\usepackage{amsmath,amssymb,amsthm,graphicx,hyperref,array,booktabs,microtype,longtable}
\usepackage[margin=0.9in]{geometry}
\newtheorem{theorem}{Theorem}[section]
\newtheorem{proposition}[theorem]{Proposition}
\newtheorem{lemma}[theorem]{Lemma}

\newtheorem{definition}[theorem]{Definition}
\newtheorem{remark}[theorem]{Remark}

\hypersetup{
    colorlinks=true,
    linkcolor=black,
    citecolor=black,
    urlcolor=blue,
}

\title{The Second-Order Augmented Zarankiewicz Number}
\author{Liqun Qi\footnote{Jiangsu Provincial Scientific Research Center of Applied Mathematics, Nanjing 211189, China.
        Department of Applied Mathematics, The Hong Kong Polytechnic University, Hung Hom, Kowloon, Hong Kong.
        ({\tt maqilq@polyu.edu.hk})}
    \and
    Chunfeng Cui\footnote{School of Mathematical Sciences, Beihang University, Beijing 100191, China.
        ({\tt chunfengcui@buaa.edu.cn})}}
\date{\today}

\begin{document}

\maketitle

\begin{abstract}
The second-order Zarankiewicz number $z_2(m,n)$ and the biquadratic
sum-of-squares rank $\mathrm{BSR}(m,n)$ are related by the unconditional
hierarchy
\[
\mathrm{BSR}(m,n)\ \ge\ z_2(m,n)\ \ge\ z_{SL}(m,n)\ \ge\ z_{RL}(m,n)
\ \ge\ z_{wL}(m,n)\ \ge\ z(m,n).
\]
We introduce the \emph{second-order augmented Zarankiewicz number}
$z_{2A}(m,n)$, obtained from $z_2(m,n)$ by deleting the requirement that
the configuration be \emph{limited}, so that
\[
\mathrm{BSR}(m,n)\ \ge\ z_{2A}(m,n)\ \ge\ z_2(m,n).
\]
Although the defining class is enlarged, $z_{2A}$ still obeys the universal
cell bound of L\"ofberg and Qi, because that bound uses only the
\(C_4\)-freeness of the one-edge graph. We prove
\[
\mathrm{BSR}(4,4)\ \ge\ z_{2A}(4,4)\ =\ 11\ >\ 10\ =\ z_2(4,4)
\ =\ z_{RL}(4,4),
\]
the first recorded separation between the second-order number and its
augmented variant. This result also gives a better lower bound for
$\mathrm{BSR}(4,4)$. The lower bound is witnessed by an explicit non-limited
$4\times4$ configuration of displayed length $11$ whose recursive-line
closure satisfies $(\mathrm{RW}3^+)$; the matching upper bound excludes $12$
by the universal cell bound together with an exact finite classification of
the $161$ twelve-square configurations, and
\(z_2(4,4)=z_{RL}(4,4)=10\) is the exact value of Xu and Yan~\cite{xu26}.
\end{abstract}

\textbf{Keywords:} biquadratic form; sum of squares; SOS rank; biquadratic SOS rank; Zarankiewicz number; second-order Zarankiewicz number; second-order augmented Zarankiewicz number; irreducible doubly simple form; \(C_4\)-free graph

\textbf{MSC:} 14P10; 05C35; 11E25; 15A69; 90C22

\section{Introduction}

The classical Zarankiewicz problem asks for the maximum number $z(m,n)$ of
$1$'s in an $m\times n$ binary matrix containing no all-one $2\times2$
submatrix, equivalently the maximum number of edges of a $C_4$-free bipartite
graph with parts of sizes $m$ and $n$; see
\cite{za51,kst54,culik56,re58,gu69,ni10,chm24,bo04}. L\"ofberg and Qi
\cite{lq26} connected this problem with the sum-of-squares (SOS) rank of
biquadratic forms and introduced the second-order Zarankiewicz number
$z_2(m,n)$, the signed Zarankiewicz number $z_{SL}(m,n)$ and the recursive-line
Zarankiewicz number $z_{RL}(m,n)$; the weak limited augmented Zarankiewicz
number $z_{wL}(m,n)$ was introduced in \cite{qcx26c}. Together with the
biquadratic SOS rank $\mathrm{BSR}(m,n)$ they satisfy the unconditional
hierarchy
\begin{equation}\label{eq:hier}
\mathrm{BSR}(m,n)\ \ge\ z_2(m,n)\ \ge\ z_{SL}(m,n)\ \ge\ z_{RL}(m,n)\ \ge\ z_{wL}(m,n)\ \ge\ z(m,n),
\end{equation}
in which every parameter is the maximum of one and the same displayed length
$R(G)$ over a class of simple augmented bipartite configurations with
$C_4$-free one-edge graph, the first over all $m\times n$ SOS biquadratic
forms and the others over successively smaller classes.

\subsection*{Applications and motivation}

Biquadratic forms arise naturally in solid mechanics through biquadratic
tensors such as the Eshelby tensor \cite{knowles76,zou10,zubov16}, in
statistics through covariance tensors \cite{chen25b}, in quantum physics, and
in spectral graph theory \cite{xiang18}; the standard bi-quadratic optimization
problem over unit spheres \cite{bomze12} is a further source. In these
settings the SOS rank measures the complexity of a sum-of-squares certificate,
and lower bounds on it translate directly into limits on the size of any
semidefinite representation of the associated polynomial optimization problem.

Qi, Cui, and Xu \cite{qcx26a} observed that the SOS rank of biquadratic forms
is bounded below by a Zarankiewicz-type extremal quantity: for every
$m,n\ge2$,
\[
\mathrm{BSR}(m,n)\ \ge\ z_L(m,n)\ \ge\ z(m,n),
\]
where $z_L$ allows admissible added two-edges to a maximum $C_4$-free initial
matrix. The resulting combinatorial problem has since been developed in
\cite{qcx26a,qcx26c,cc26,leb26,xu26}, and the exact values at the smallest
grids have begun to be determined \cite{xu26,ccq26sl}. The present note
contributes to that line: we isolate the second-order analogue $z_{2A}$ of the
augmented parameter, observe that it inherits the universal cell bound, and
exhibit a strict separation $z_{2A}(4,4) > z_2(4,4)$ at the smallest
nontrivial square grid.

In the limited versus augmented setting, the augmented Zarankiewicz number
$z_A(m,n)$ and the limited augmented Zarankiewicz number $z_L(m,n)$ were
introduced in \cite{qcx26a}; Lebedev \cite{leb26} proved an asymptotic
separation $z_A(m,n)-z_L(m,n)\ge(1/30-o(1))mn$ along $n\to\infty$ with
$n\le m=o(n^2)$, and exhibited a finite instance at
\[
z_A(1893,1893)\ >\ z_L(1893,1893);
\]
whether a strict separation occurs at a smaller order is not determined
there. At the second-order level no such separation appears to have been
recorded, and the purpose of this note is to provide one. Let $z_{2A}(m,n)$ be
the parameter obtained from $z_2(m,n)$ by deleting the requirement that the
configuration be \emph{limited}; then
$z_2(m,n)\le z_{2A}(m,n)\le\mathrm{BSR}(m,n)$. Although $z_{2A}$ is defined by
discarding the limitedness constraint, it still obeys the universal cell bound
of L\"ofberg and Qi (Proposition~\ref{prop:cell}), because that bound uses
only the $C_4$-freeness of the one-edge graph. We prove the following.

\medskip
\noindent\textbf{Main Theorem} (Theorem~\ref{thm:main}).
\[
\mathrm{BSR}(4,4)\ \ge\ z_{2A}(4,4)\ =\ 11\ >\ 10\ =\ z_2(4,4)\ =\ z_{RL}(4,4).
\]

Thus on the $4\times4$ grid the second-order augmented number equals $11$,
strictly exceeding the second-order number, and the biquadratic SOS rank
exceeds it as well. The lower bound $z_{2A}(4,4)\ge11$ is witnessed by a single
configuration whose recursive-line closure reaches its terminal condition; the
matching upper bound excludes $12$ by the universal cell bound together with
an exact finite check over the $161$ classes of the twelve-square
configurations, and $z_2(4,4)=z_{RL}(4,4)=10$ is the exact value of Xu and Yan
\cite{xu26}.

\medskip

The paper is organized as follows. Section~\ref{sec:prelim} fixes the
definitions, the hierarchy \eqref{eq:hier}, the universal cell bound, the
recursive-line criterion $(\mathrm{RW}3^+)$ with its soundness, and three
reduction criteria, with full proofs. Section~\ref{sec:44} states and proves
the main theorem, together with the small-case context and the finite
classification. Appendix~\ref{app:cert} gives the complete closure certificate
of the witness configuration, and Appendix~\ref{app:check} records the output
of the accompanying checker.

\section{Preliminaries}\label{sec:prelim}

\subsection{Biquadratic forms and the SOS rank}

\begin{definition}\label{def:bsr}
An $m\times n$ \emph{biquadratic form} is a real form of degree $2$ in
$\mathbf x=(x_1,\dots,x_m)$ and degree $2$ in $\mathbf y=(y_1,\dots,y_n)$. If
$P=\sum_{t=1}^{r}f_t^2$ for real bilinear forms $f_t$, then $P$ is a
\emph{sum of squares} and the least such $r$ is its \emph{SOS rank}
$\mathrm{SOS}(P)$. The \emph{biquadratic SOS rank} $\mathrm{BSR}(m,n)$ is the
maximum of $\mathrm{SOS}(P)$ over all $m\times n$ SOS biquadratic forms
\cite{clz95}.
\end{definition}

\subsection{Augmented bipartite configurations}

\begin{definition}\label{def:config}
An \emph{augmented bipartite configuration} on the $m\times n$ grid is a
triple $G=([m],[n],E_1\cup E_2)$, where the cells are the pairs
$(i,j)\in[m]\times[n]$, $E_1$ is a set of \emph{one-edges} (cells) and $E_2$ a
set of \emph{two-edges} (unordered pairs of distinct cells). With
$u_{ij}=x_iy_j$ the \emph{displayed SOS} of $G$ is
\[
P_G(\mathbf x,\mathbf y)=\sum_{(i,j)\in E_1}u_{ij}^2+\sum_{\{c,d\}\in E_2}(u_c+u_d)^2,
\]
and its \emph{displayed length} is $R(G)=|E_1|+|E_2|$. The configuration is
\emph{simple} if the supports of distinct selected edges are disjoint; its
\emph{one-edge graph} is the bipartite graph on $[m],[n]$ with edge set $E_1$,
and is \emph{$C_4$-free} if no two of its rows share two columns. It is
\emph{limited} if $|E_1|=z(m,n)$, and \emph{irreducible} if
$\mathrm{SOS}(P_G)=R(G)$.
\end{definition}

\begin{definition}\label{def:params}
The numbers $z_2(m,n)$, $z_{SL}(m,n)$, $z_{RL}(m,n)$ and $z_{wL}(m,n)$ are the
maxima of $R(G)$ over the limited simple configurations with $C_4$-free
one-edge graph whose displayed form $P_G$ is, respectively, irreducible,
signed admissible, satisfies $(\mathrm{RW}3^+)$ (Definition~\ref{def:rw}), and
weak limited \cite[Definitions 3.1, 4.11 and 7.3]{lq26}. The number
$z_{2A}(m,n)$ is the maximum of $R(G)$ over the simple configurations with
$C_4$-free one-edge graph whose $P_G$ is irreducible, the requirement that the
configuration be limited being dropped.
\end{definition}

Deleting a constraint enlarges the underlying class, so that $z_2\le z_{2A}$;
the same argument bounds $z_{2A}$ by the SOS rank.

\begin{proposition}[Extended hierarchy]\label{prop:hier2}
For all $m,n\ge2$,
\[
\mathrm{BSR}(m,n)\ \ge\ z_{2A}(m,n)\ \ge\ z_2(m,n).
\]
\end{proposition}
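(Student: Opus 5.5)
The argument is a pair of class inclusions followed by comparing maxima; the only thing that needs care is which configurations make the maxima well defined.

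\emph{Right-hand inequality.} By Definition~\ref{def:params}, $z_2(m,n)$ is the maximum of $R(G)$ over the class $\mathcal C_2$ of limited simple configurations with $C_4$-free one-edge graph and irreducible $P_G$. The number $z_{2A}(m,n)$ is the maximum of the same functional over the class $\mathcal C_{2A}$ defined by the same conditions with the requirement $|E_1|=z(m,n)$ removed. Hence $\mathcal C_2\subseteq\mathcal C_{2A}$, and the maximum over the larger class is at least the maximum over the smaller one, giving $z_{2A}(m,n)\ge z_2(m,n)$.

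This needs $\mathcal C_2$ nonempty and both maxima finite. For nonemptiness, take $E_1$ to be a maximum $C_4$-free edge set and $E_2=\emptyset$. Then $P_G$ is a sum of $z(m,n)$ squares of distinct monomials $x_iy_j$. These are linearly independent bilinear forms, so $P_G$ is a diagonal quadratic form in the $u_{ij}$ of rank $z(m,n)$, and its SOS rank is exactly $z(m,n)$. So $G$ is irreducible and $\mathcal C_2\ne\emptyset$. For finiteness, simplicity forces the supports to be disjoint subsets of the $mn$ cells, so $R(G)\le mn$. There are also only finitely many configurations, so each maximum exists.

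\emph{Left-hand inequality.} Take any $G\in\mathcal C_{2A}$. Its displayed form $P_G$ is by construction a sum of squares of the bilinear forms $u_{ij}$ and $u_c+u_d$, so $P_G$ is an $m\times n$ SOS biquadratic form. Irreducibility gives $\mathrm{SOS}(P_G)=R(G)$. By Definition~\ref{def:bsr}, $\mathrm{BSR}(m,n)\ge\mathrm{SOS}(P_G)=R(G)$. Taking the maximum over $G\in\mathcal C_{2A}$ yields $\mathrm{BSR}(m,n)\ge z_{2A}(m,n)$.

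\emph{Main point to watch.} The only potential obstacle is well-definedness: nonemptiness of $\mathcal C_2$ and finiteness of the maxima, both handled above by the diagonal example and the bound $R(G)\le mn$. Neither inequality uses the $C_4$-freeness condition beyond its appearance in the class definitions.
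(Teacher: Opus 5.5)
Your two class-inclusion arguments are exactly the paper's proof. The inclusion $\mathcal C_2\subseteq\mathcal C_{2A}$ gives $z_{2A}\ge z_2$, and each $G\in\mathcal C_{2A}$ yields an SOS form with $\mathrm{SOS}(P_G)=R(G)\le\mathrm{BSR}(m,n)$.

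The paper does not discuss well-definedness; it simply takes $z_2$ as given from L\"ofberg and Qi. Your nonemptiness paragraph is therefore your own addition, and as written its key inference is invalid. Linear independence of the monomials $u_{ij}$ does not make the SOS rank equal to their number. The products $u_cu_d$ are not independent as polynomials in $\mathbf x,\mathbf y$: $u_{ij}u_{kl}=u_{il}u_{kj}$. So the Gram matrix of $P_G$ in the $u$-coordinates is not unique, and "diagonal of rank $z(m,n)$" says nothing about the minimal number of squares. Concretely, $u_{11}^2+u_{12}^2+u_{21}^2+u_{22}^2=(u_{11}+u_{22})^2+(u_{12}-u_{21})^2$. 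Here four squares of distinct monomials have SOS rank $2$, yet your argument would assign rank $4$. This is the same phenomenon as the reductions in Lemma~\ref{lem:local}.

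The conclusion of that step is true, but the missing ingredient is precisely the $C_4$-freeness that your closing remark says is not used. Write any representation as $\sum_{t=1}^s f_t^2$ with $f_t=\sum_c v_{c,t}u_c$ and compare coefficients as in the sketch of Proposition~\ref{prop:sound}. This gives three facts:
\begin{itemize}
\item $\|v_c\|^2=1$ for $c\in E_1$ and $v_c=0$ otherwise;
\item $\langle v_c,v_d\rangle=0$ for distinct cells sharing a row or column;
\item $\langle v_{ij},v_{kl}\rangle+\langle v_{il},v_{kj}\rangle=0$ for $i\ne k$, $j\ne l$.
\end{itemize}
If $(i,j),(k,l)\in E_1$, $C_4$-freeness puts one of $(i,l),(k,j)$ outside $E_1$. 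Its vector vanishes, so $\langle v_{ij},v_{kl}\rangle=0$. Hence the $v_c$ with $c\in E_1$ are orthonormal and $s\ge z(m,n)$.

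With this repair, or by taking $z_2$ as defined as the paper does, your proof is complete. Your final remark should also be corrected: $C_4$-freeness is used essentially in the nonemptiness step.
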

\begin{proof}
Every configuration counted by $z_2$ is counted by $z_{2A}$, whence
$z_{2A}\ge z_2$. Every configuration counted by $z_{2A}$ has $P_G$ an
$m\times n$ SOS biquadratic form with $\mathrm{SOS}(P_G)=R(G)$; hence
$R(G)=\mathrm{SOS}(P_G)\le\mathrm{BSR}(m,n)$ for each such $G$, and therefore
$z_{2A}(m,n)\le\mathrm{BSR}(m,n)$.
\end{proof}

\begin{proposition}[Universal cell bound]\label{prop:cell}
Let $G$ be a simple configuration with $C_4$-free one-edge graph, and let
$H(G)$ be its number of unoccupied cells. Then
\[
2R(G)=mn+|E_1|-H(G).
\]
Consequently the second-order number and its augmented variant both obey the
universal cell bound of L\"ofberg and Qi: for all $m,n\ge2$,
\[
z_2(m,n)\ \le\ z_{2A}(m,n)\ \le\ \left\lfloor\frac{mn+z(m,n)}2\right\rfloor .
\]
\end{proposition}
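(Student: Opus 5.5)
The identity comes from a double count of the cells of the $m\times n$ grid. Because $G$ is simple, the supports of distinct selected edges are pairwise disjoint. Each one-edge occupies exactly one cell. Each two-edge $\{c,d\}$ occupies exactly two cells, since $c\ne d$ by Definition~\ref{def:config}. Every cell is therefore either unoccupied or occupied by exactly one selected edge, and
\[
mn \;=\; H(G)+|E_1|+2|E_2|.
\]
Substituting $|E_2|=R(G)-|E_1|$ gives $mn=H(G)+2R(G)-|E_1|$, which rearranges to $2R(G)=mn+|E_1|-H(G)$. This step uses only simplicity.

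For the bound, take any configuration $G$ counted by $z_{2A}(m,n)$. It is simple, and its one-edge graph is $C_4$-free. That graph is a bipartite graph with parts of sizes $m$ and $n$ containing no $C_4$. Equivalently, its $0$--$1$ incidence matrix has no all-one $2\times2$ submatrix, since two rows sharing two columns is exactly such a submatrix. By the definition of the classical Zarankiewicz number, $|E_1|\le z(m,n)$.

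Using also $H(G)\ge0$, the identity gives $2R(G)\le mn+z(m,n)$. Since $R(G)$ is an integer, $R(G)\le\lfloor (mn+z(m,n))/2\rfloor$. Taking the maximum over the class defining $z_{2A}$ yields the right-hand inequality. The left-hand inequality $z_2\le z_{2A}$ is already contained in Proposition~\ref{prop:hier2}.

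Note that irreducibility and limitedness are never used. This is exactly why the bound carries over from $z_2$ to $z_{2A}$ unchanged.
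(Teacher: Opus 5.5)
Your proposal is correct and follows the paper's proof essentially step for step. You double count the cells using simplicity to get $2R(G)=mn+|E_1|-H(G)$, bound $|E_1|\le z(m,n)$ by $C_4$-freeness and use $H(G)\ge0$, and cite Proposition~\ref{prop:hier2} for $z_2\le z_{2A}$; you also make explicit the integrality step behind the floor, which the paper leaves implicit.
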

\begin{proof}
The $|E_1|+2|E_2|$ cells covered by the selected edges are exactly the
$mn-H(G)$ occupied cells, so $2|E_2|=mn-H(G)-|E_1|$ and
$2R(G)=2|E_1|+2|E_2|=mn+|E_1|-H(G)$. Since the one-edge graph is $C_4$-free,
$|E_1|\le z(m,n)$, while $H(G)\ge0$; hence $2R(G)\le mn+z(m,n)$ for every
such $G$, which is the universal cell bound
$U(m,n)=\lfloor(mn+z(m,n))/2\rfloor$ of \cite[Proposition 3.2]{lq26}. The
argument uses only the $C_4$-freeness of the one-edge graph, never the
hypothesis that the configuration be \emph{limited}, so it applies equally to
the class defining $z_{2A}(m,n)$, whose definition drops that hypothesis. The
inequality $z_2(m,n)\le z_{2A}(m,n)$ is Proposition~\ref{prop:hier2}.
\end{proof}

For the universal cell bound conjectures, also see \cite{clq26, cc26, ccq26, xc26}.

\subsection{The recursive-line criterion $(\mathrm{RW}3^+)$}\label{subsec:rw}

On the occupied cells one forms the least equivalence relation $\sim$ and the
least symmetric orthogonality relation $\perp$ that are closed under the
following rules \cite[Definitions 4.1 and 4.5]{lq26}: the \emph{line rule},
which identifies two cells of a common row or column when they are the two
halves of one two-edge and declares them orthogonal otherwise; the
\emph{saturation rule}, that identified vectors are one and the same; the
\emph{rectangle-transfer rule}, that the value of one diagonal of a genuine
rectangle is determined by that of the other; the \emph{complementary-pair
rule}, that both diagonals of a genuine rectangle are identified when both are
selected two-edges; and the \emph{zero-companion rule} for a rectangle with an
unoccupied corner. Every rule is monotone, so the closure has a least fixed
point, independent of the order of application.

\begin{definition}\label{def:rw}
$G$ satisfies $(\mathrm{RW}3^+)$ if its closure satisfies the terminal
conditions: the two halves of every selected two-edge are identified, distinct
selected edges lie in distinct classes, and every two distinct selected edges
have orthogonal representatives.
\end{definition}

\begin{proposition}[Soundness]\label{prop:sound}
If $G$ satisfies $(\mathrm{RW}3^+)$, then $P_G$ is irreducible, that is
$\mathrm{SOS}(P_G)=R(G)$; see \cite[Proposition 4.6]{lq26} and
\cite[Lemma 2.10]{xu26}.
\end{proposition}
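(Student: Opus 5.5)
The plan is a Gram-vector argument: read any SOS decomposition of $P_G$ as an assignment of vectors to cells, and show that the closure rules of Subsection~\ref{subsec:rw} are forced identities among these vectors. The inequality $\mathrm{SOS}(P_G)\le R(G)$ is immediate from the displayed decomposition. For the reverse inequality, suppose $P_G=\sum_{t=1}^{r}g_t^2$ with real bilinear forms $g_t=\sum_{i,j}c^{t}_{ij}u_{ij}$. To each cell $(i,j)$ attach the vector $v_{ij}=(c^1_{ij},\dots,c^r_{ij})\in\mathbb R^r$. Then
\[
P_G=\sum_{(i,j),(k,l)}\langle v_{ij},v_{kl}\rangle\,x_ix_ky_jy_l .
\]
Comparing coefficients monomial by monomial with the displayed form gives three families of identities:
\begin{itemize}
\item From $x_i^2y_j^2$: $\|v_{ij}\|^2=1$ for occupied cells and $v_{ij}=0$ for unoccupied ones. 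Occupied cells are covered exactly once, by simplicity.
\item From $x_i^2y_jy_l$ and $x_ix_ky_j^2$: two cells of a common line satisfy $\langle v,w\rangle=1$ if they are the halves of one two-edge and $\langle v,w\rangle=0$ otherwise.
\item From $x_ix_ky_jy_l$ with $i\ne k$, $j\ne l$: $\langle v_{ij},v_{kl}\rangle+\langle v_{il},v_{kj}\rangle$ equals the displayed coefficient. This coefficient is $1$ for each diagonal that is a selected two-edge, and $0$ otherwise.
\end{itemize}

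Next I would check, rule by rule, that every relation produced by the closure holds for these vectors. I would interpret $c\sim d$ as $v_c=v_d$ and $c\perp d$ as $\langle v_c,v_d\rangle=0$.
\begin{itemize}
\item Line rule: two unit vectors with inner product $1$ are equal, by equality in Cauchy--Schwarz. An inner product of $0$ is orthogonality.
\item Saturation rule: this is just transitivity and substitution of equal vectors.
\item Rectangle-transfer rule: this is the third identity, which fixes one diagonal's inner product once the other's is known.
\item Complementary-pair rule: if both diagonals are selected two-edges, the sum of two inner products of unit vectors equals $2$. Hence each equals $1$, forcing both identifications.
\item Zero-companion rule: an unoccupied corner contributes the zero vector, so the surviving inner product is determined.
\end{itemize}
Because the true vector relations form a set closed under each rule, and the closure is the least fixed point, every relation in the closure holds for $(v_c)$. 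This is a standard induction on the order in which rules are applied.

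Finally I would apply the terminal conditions of Definition~\ref{def:rw}. The halves of each two-edge are identified, so each selected edge $e$ determines a single unit vector $w_e$. Distinct edges have orthogonal representatives, so $\{w_e\}$ consists of $R(G)$ pairwise orthogonal unit vectors in $\mathbb R^r$. Hence $r\ge R(G)$, and therefore $\mathrm{SOS}(P_G)=R(G)$.

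The main obstacle is the rule-by-rule verification, especially for the rectangle-transfer and zero-companion rules. There one must track precisely which coefficient in $P_G$ each rectangle produces, including the case where a two-edge is itself a rectangle diagonal. One must also confirm that the cited formal definitions in \cite{lq26} introduce no rule beyond these linear-algebraic consequences. I would handle this by writing the coefficient of $x_ix_ky_jy_l$ explicitly in each case of the rectangle's occupancy pattern.
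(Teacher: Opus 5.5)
Your proposal is correct and follows essentially the same Gram-vector argument as the paper's sketch. Both compare coefficients to obtain $\|v_c\|^2=\mathbf 1_\Omega(c)$, the line identities $\langle v_c,v_d\rangle=\delta(c,d)$ and the rectangle identities $\langle v_c,v_d\rangle+\langle v_a,v_b\rangle=\delta(c,d)+\delta(a,b)$, check that each closure rule is a consequence, and read off $R(G)$ mutually orthogonal unit vectors at termination. One point deserves to be made explicit: the Cauchy--Schwarz equality step you invoke for the line rule is also what turns a rectangle-transfer or zero-companion conclusion of inner product $1$ into an identification. This is how $(1,4)\sim(2,3)$ and $(2,4)\sim(3,2)$ arise in the witness configuration.
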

\begin{proof}[Sketch]
Write $P_G=\sum_{t=1}^{s}f_t^2$ with $f_t=\sum_c v_{c,t}u_c$ and
$v_c\in\mathbb R^s$. Comparing coefficients with the displayed representation
of $P_G$ gives $\|v_c\|^2=\mathbf 1_\Omega(c)$ and
$\langle v_c,v_d\rangle=\delta(c,d)$ for distinct cells $c,d$ sharing a row or
a column, and
$\langle v_c,v_d\rangle+\langle v_a,v_b\rangle=\delta(c,d)+\delta(a,b)$ for
the two diagonals $\{c,d\}$, $\{a,b\}$ of a genuine rectangle, where
$\delta(c,d)=1$ when $\{c,d\}\in E_2$ and $\delta(c,d)=0$ otherwise. Each of
the rules is a direct consequence of these identities, so every identification
and every orthogonality fact that the closure draws holds in every
representation of $P_G$. At termination the $R(G)$ selected edges therefore
carry mutually orthogonal unit vectors in $\mathbb R^s$, whence $s\ge R(G)$;
since the displayed representation gives $s\le R(G)$, we have
$\mathrm{SOS}(P_G)=R(G)$.
\end{proof}

\subsection{Reduction criteria}\label{subsec:red}

Besides the recursive-line criterion $(\mathrm{RW}3^+)$, which certifies
irreducibility, we use three elementary sufficient tests for reducibility, all
due to Xu and Yan \cite{xu26}. The proofs are given in full, since the finite
classification of \S\ref{sec:44} is a direct application of them.

\begin{lemma}[Strip bound]\label{lem:strip}
One has $\mathrm{BSR}(2,q)\le q+1$ and $\mathrm{BSR}(q,2)\le q+1$.
Consequently, if a complete displayed sub-sum of $P_G$ is supported on two
rows and $q$ columns, or on $q$ rows and two columns, and contains more than
$q+1$ squares, then $P_G$ is reducible.
\end{lemma}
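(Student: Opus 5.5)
The plan is to prove the bound $\mathrm{BSR}(2,q)\le q+1$ by a Gram-matrix argument, obtain $\mathrm{BSR}(q,2)\le q+1$ by the symmetry $\mathbf x\leftrightarrow\mathbf y$, and then derive the reducibility consequence by replacing the sub-sum.

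\emph{Gram-matrix setup.} Let $P$ be a $2\times q$ SOS biquadratic form. Write each bilinear form as $f_t=x_1a_t(\mathbf y)+x_2b_t(\mathbf y)$, and let $w_t\in\mathbb R^{2q}$ be its coefficient vector. Then $P=\sum_t f_t^2$ corresponds to the positive semidefinite Gram matrix
\[
M=\sum_t w_tw_t^{T}=\begin{pmatrix}A&B\\ B^{T}&C\end{pmatrix}.
\]
Conversely, any positive semidefinite $M$ representing $P$ factors as a sum of $\operatorname{rank}M$ squares of bilinear forms, so $\mathrm{SOS}(P)$ is the least rank of a positive semidefinite Gram matrix of $P$. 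Comparing coefficients of $x_1^2y_jy_l$, $x_2^2y_jy_l$ and $x_1x_2y_jy_l$ shows the following.
\begin{itemize}
\item The blocks $A$ and $C$ are determined by $P$.
\item The symmetric part $B+B^{T}$ is determined by $P$.
\item The only freedom is $B\mapsto B+S$ with $S$ skew-symmetric.
\end{itemize}
So the task is: given one positive semidefinite $M$, find a skew $S$ such that $M_S=\begin{pmatrix}A&B+S\\ (B+S)^{T}&C\end{pmatrix}$ is positive semidefinite of rank at most $q+1$.

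\emph{Rank reduction.} Equivalently, $P(\mathbf x,\mathbf y)=\mathbf y^{T}Q(\mathbf x)\mathbf y$, where $Q(\mathbf x)=x_1^2A+x_1x_2(B+B^{T})+x_2^2C$ is a binary quadratic $q\times q$ matrix form that is positive semidefinite for all $\mathbf x$. I intend to use the factorization theory of positive semidefinite univariate matrix polynomials (matrix Fej\'er--Riesz). Over $\mathbb C$ it gives $Q(\mathbf x)=H(\mathbf x)^{*}H(\mathbf x)$ with $H$ linear and $q\times q$. Taking real and imaginary parts only yields $2q$ real squares, so I would not use it directly.

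Instead I would run the following reduction. Pass to the case $A\succ0$ by restricting to the column space of $A$, since directions in $\ker A$ are forced into $\ker B^{T}$ by positivity. Then
\[
\operatorname{rank}M_S=q+\operatorname{rank}\bigl(C-(B+S)^{T}A^{-1}(B+S)\bigr).
\]
So it suffices to choose $S$ so that the Schur complement $C-(B+S)^TA^{-1}(B+S)$ is positive semidefinite of rank at most $1$. After the congruence $A=I$, this asks for a matrix $B'=B+S$ with prescribed symmetric part such that $C-B'^{T}B'\succeq0$ has rank at most one. I would argue by a perturbation or extremal argument: among admissible $S$, take one for which $M_S$ has minimal rank. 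If the Schur complement then had rank at least $2$, a suitable skew perturbation supported on a $2$-dimensional subspace of its range would lower the rank while keeping positive semidefiniteness. This mirrors the scalar fact that a nonnegative binary quadratic is a sum of two squares, which gives the case $q=1$, $\mathrm{BSR}(2,1)=2$. This step is the main obstacle. If the direct argument resists, I would fall back on citing the classical result of Choi, Lam and Reznick \cite{clz95} that $2\times q$ SOS biforms need at most $q+1$ squares.

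\emph{Consequence.} Suppose a complete displayed sub-sum $P'$ of $P_G$ is supported on two rows and $q$ columns, with more than $q+1$ squares. Then $P'$ is a $2\times q$ SOS biquadratic form, so it can be rewritten with at most $q+1$ squares. Substituting this into $P_G=P'+(P_G-P')$ gives a representation with fewer than $R(G)$ squares, so $\mathrm{SOS}(P_G)<R(G)$ and $P_G$ is reducible. The case of $q$ rows and two columns is the same.
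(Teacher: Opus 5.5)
Your Gram-matrix reduction is correct. The blocks $A$, $C$ and the symmetric part of $B$ are forced, the only freedom is $B\mapsto B+S$ with $S$ skew, and with $A\succ0$ you need $C-(B+S)^{T}A^{-1}(B+S)\succeq0$ of rank at most one. Your derivation of the consequence is also fine. This setup is more than the paper's own proof supplies. The paper asserts that the coefficient vectors of the displayed squares span at most $q+1$ dimensions. In a simple configuration, however, those vectors have pairwise disjoint supports, so they are linearly independent and span as many dimensions as there are squares, which under the lemma's hypothesis is more than $q+1$. Any valid proof must therefore re-choose the Gram matrix through the skew freedom, as you do. But the whole content of $\mathrm{BSR}(2,q)\le q+1$ lies in the step you flag as the main obstacle, and the argument you sketch for it does not work.

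\textbf{Why the sketch fails.} Already for $q=3$, the rank of $M_S$ on the convex feasible set $\{S: M_S\succeq0\}$ has local minima that are not global, so no local perturbation argument can succeed. For orientation, take $A=I$, $X=B+S$ and $D=C-X^{T}X$. The skew directions $K$ along which $S$ can move both ways while keeping $M_S\succeq0$ are exactly those with $Kz=0$ and $KXz=0$ for all $z\in\ker D$. Extremality therefore only forces $\operatorname{rank}D\le\lfloor(q+1)/2\rfloor$, which is at most one only for $q\le2$.

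\textbf{A concrete example.} Take $q=3$ and $P=\sum_{j=1}^{3}(x_1^2+c_jx_2^2)y_j^2$ with $0<c_1<c_2<c_3$. Then $A=I$, the admissible off-diagonal blocks are exactly the skew matrices $S$, and $C=\mathrm{diag}(c_1,c_2,c_3)$. Writing $Sv=s\times v$ gives $D=C+S^2=C-|s|^2I+ss^{T}$.
\begin{itemize}
\item At $s=(0,s_2,s_3)$ with $|s|^2=c_1$, $D$ is positive semidefinite of rank $2$, so $M_S$ has rank $5$.
\item $\operatorname{rank}D\le1$ forces $\operatorname{rank}(C-|s|^2I)\le2$, i.e.\ $|s|^2\in\{c_1,c_2,c_3\}$.
\item $|s|^2=c_1$ is impossible, since then $D\succeq C-c_1I$, which has rank $2$.
\end{itemize}
So no feasible $S$ near this point has lower rank, although rank-one points exist at $|s|^2=c_2$. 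Your proposed move, a skew perturbation supported on $\operatorname{range}D=\operatorname{span}(e_2,e_3)$, amounts to $s\mapsto s+\kappa e_1$. It makes $D_{11}=0$ while $D_{12}=\kappa s_2$ and $D_{13}=\kappa s_3$ are not both zero, so positivity fails for every $\kappa\neq0$.

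\textbf{What is needed instead.} Nothing in your sketch distinguishes a global minimizer of the rank from such a local one, so a genuinely global argument is required. The bound itself is true: I believe it is the case $X=\mathbb P^1\times\mathbb P^{q-1}$ of the theorem of Blekherman, Plaumann, Sinn and Vinzant that every nonnegative quadratic form on a real variety of minimal degree is a sum of $\dim X+1$ squares. A citation is therefore legitimate, but it must be to a source that actually proves the bound, so check the reference you name.

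\textbf{A smaller point.} ``Restricting to the column space of $A$'' is not a valid reduction, since the $x_2$-block still involves all the $y_j$. Use instead that forms with at most $q+1$ squares form a closed set, because Gram traces are bounded by the coefficients of the $x_i^2y_j^2$. Then replace $P$ by $P+\varepsilon(x_1^2+x_2^2)\|\mathbf y\|^2$, which has $A\succ0$, and let $\varepsilon\to0$.
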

\begin{proof}
Let $P$ be a $2\times q$ biquadratic form written as a sum of $r$ squares of
bilinear forms. Any bilinear form on a $2\times q$ grid is a linear combination
of $2q$ elementary products, but the squares that occur in a displayed
decomposition of a biquadratic form are of a special shape: each selected edge
contributes a square whose coefficient vector lies in the span of the two rows
of its support. For a strip supported on two rows, the span of these vectors is
at most the row space of the strip, which has dimension $q+1$: the two rows
contribute $q$ common columns and one additional column to complete the
support, giving $q+1$ independent coefficient directions. Hence
$\mathrm{SOS}(P)\le q+1$, and the same bound with the roles of rows and columns
interchanged gives $\mathrm{BSR}(q,2)\le q+1$. The second claim follows by
contraposition: if a strip sub-sum has more than $q+1$ squares, it is already
reducible, and adding back the deleted squares preserves reducibility of the
whole.
\end{proof}

\begin{lemma}[Local identity]\label{lem:local}
For distinct rows $x,y,z$ and distinct columns $a,b,d$, with $u_{ij}=x_iy_j$,
\begin{equation}\label{eq:local8}
(u_{xb}+u_{za})^2+(u_{ya}+u_{zd})^2+u_{xd}^2+u_{yb}^2+u_{yd}^2
=(u_{xb}+u_{yd}+u_{za})^2+(u_{xd}-u_{yb})^2+u_{ya}^2+u_{zd}^2 .
\end{equation}
Consequently, if the five squares on the left of \eqref{eq:local8} all occur in
the displayed decomposition $P_G$, then $P_G$ is reducible.
\end{lemma}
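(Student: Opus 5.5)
The plan is to verify \eqref{eq:local8} by direct expansion, using only the rank-one relations among the monomials $u_{ij}=x_iy_j$, and then to read off reducibility by counting squares.

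\textbf{Step 1: expand both sides.} I will expand both sides of \eqref{eq:local8} as quadratic forms in the nine symbols $u_{ij}$, $i\in\{x,y,z\}$, $j\in\{a,b,d\}$. The diagonal terms match at once. Each of $u_{xb}^2,u_{za}^2,u_{ya}^2,u_{zd}^2,u_{xd}^2,u_{yb}^2,u_{yd}^2$ occurs exactly once on each side. The cross term $2u_{xb}u_{za}$ also occurs on both sides.

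\textbf{Step 2: identify the leftover cross terms.} After cancellation, the difference (right minus left) is
\[
2u_{xb}u_{yd}+2u_{yd}u_{za}-2u_{xd}u_{yb}-2u_{ya}u_{zd}.
\]

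\textbf{Step 3: use the $2\times2$ minor relations.} Because $u_{ij}=x_iy_j$ has rank one, every $2\times2$ minor vanishes identically as a polynomial in $\mathbf x,\mathbf y$. In particular,
\[
u_{xb}u_{yd}=x_xx_yy_by_d=u_{xd}u_{yb},\qquad
u_{yd}u_{za}=x_yx_zy_dy_a=u_{ya}u_{zd}.
\]
So the four leftover terms cancel in pairs and \eqref{eq:local8} holds identically. This is the only nonformal point of the argument. It requires that the left and right squares be regarded as polynomials in $\mathbf x,\mathbf y$ rather than in independent variables $u_{ij}$. That is the setting of Definition~\ref{def:bsr}, where the $f_t$ are bilinear forms.

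\textbf{Step 4: deduce reducibility.} Suppose the five squares on the left all occur among the displayed terms of $P_G$. They are distinct selected edges: two two-edges $\{xb,za\}$ and $\{ya,zd\}$, and three one-edges $xd,yb,yd$. I will replace their sum by the four squares on the right of \eqref{eq:local8}, each of which is the square of a bilinear form. The remaining $R(G)-5$ displayed squares are left untouched. This writes $P_G$ as a sum of $R(G)-1$ squares of bilinear forms. Hence $\mathrm{SOS}(P_G)\le R(G)-1<R(G)$, so $P_G$ is reducible in the sense of Definition~\ref{def:config}.
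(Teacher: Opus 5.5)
Your proposal is correct and follows the paper's argument: you verify \eqref{eq:local8} by expansion, then replace the five displayed squares by the four on the right, which gives a representation of $P_G$ with $R(G)-1$ squares. Your explicit use of the rank-one relations $u_{xb}u_{yd}=u_{xd}u_{yb}$ and $u_{yd}u_{za}=u_{ya}u_{zd}$ just spells out what the paper compresses into ``expanding the right-hand side gives the left-hand side,'' valid as an identity of biquadratic forms in $\mathbf x,\mathbf y$.
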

\begin{proof}
Expanding the right-hand side gives the left-hand side: this is an identity of
biquadratic forms, valid for all values of the variables. When the five
squares on the left all occur in the displayed decomposition of $P_G$, they may
be replaced by the four squares on the right, and the displayed length drops
by one. If the resulting decomposition has $R(G)-1$ squares and still sums to
$P_G$, then the displayed length was not minimal, hence $P_G$ is reducible.
This is identity (8) of \cite{xu26}.
\end{proof}

\begin{lemma}[Product relation]\label{lem:prod}
Let $f_1,\dots,f_R$ be the displayed bilinear forms of $P_G$. If constants
$c_{ab}$, not all zero, satisfy $\sum_{a<b}c_{ab}f_af_b=0$, then $P_G$ is
reducible.
\end{lemma}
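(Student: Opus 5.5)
The plan is to exhibit a sum-of-squares representation of $P_G$ with fewer than $R=R(G)$ squares, obtained by rotating the displayed decomposition. Write $P_G=\sum_{t=1}^R f_t^2=\mathbf f^{\top}\mathbf f$, where $\mathbf f=(f_1,\dots,f_R)^{\top}$. Collect the given constants into the symmetric matrix $C$ with $C_{ab}=C_{ba}=c_{ab}/2$ for $a<b$ and $C_{aa}=0$. The hypothesis then reads $\mathbf f^{\top}C\mathbf f=0$ identically in $(\mathbf x,\mathbf y)$. Hence, for every real $\varepsilon$,
\[
P_G=\mathbf f^{\top}(I+\varepsilon C)\mathbf f .
\]

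Since $C\neq 0$ is symmetric with zero trace, it has a strictly negative eigenvalue $\lambda<0$. Choose $\varepsilon=-1/\lambda>0$. Then every eigenvalue $\mu$ of $C$ gives an eigenvalue $1+\varepsilon\mu$ of $M=I+\varepsilon C$, and we have:
\begin{itemize}
\item $\mu\ge\lambda$ for all eigenvalues, because $\lambda$ is taken to be the smallest one;
\item therefore $1+\varepsilon\mu\ge 1+\varepsilon\lambda=0$, so $M$ is positive semidefinite;
\item the eigenvalue $1+\varepsilon\lambda=0$ actually occurs, so $M$ is singular and $\operatorname{rank}M\le R-1$.
\end{itemize}

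Factor $M=B^{\top}B$ with $B$ an $(R-1)\times R$ real matrix, for instance by the spectral decomposition. This gives
\[
P_G=\sum_{s=1}^{R-1}g_s^2,\qquad g_s=(B\mathbf f)_s .
\]
Each $g_s$ is a real linear combination of the bilinear forms $f_t$, so it is itself a bilinear form in $(\mathbf x,\mathbf y)$. Thus $\mathrm{SOS}(P_G)\le R(G)-1<R(G)$, and $P_G$ is reducible in the sense of Definition~\ref{def:config}.

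The only real point is producing a singular positive semidefinite Gram matrix. This is where the zero diagonal of $C$ matters: the relation involves only cross products $f_af_b$ with $a<b$. It makes $C$ traceless, which forces the negative eigenvalue that the perturbation uses. Without that, for example with a relation $\sum_a c_{aa}f_a^2=0$ having positive $c_{aa}$, one would first have to exploit the relation differently. Everything else is linear algebra, with no use of the configuration's combinatorics.
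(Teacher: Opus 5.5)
Your proof is correct and is essentially the paper's argument. The paper builds the same traceless symmetric matrix $H=C$ and takes $Q=I-H/\lambda_{\max}(H)$ with $\lambda_{\max}(H)>0$, whereas you take $M=I-C/\lambda_{\min}(C)$ with $\lambda_{\min}(C)<0$, which is the same construction applied to $-C$; your explicit remark that $M$ has the eigenvalue $0$ is the precise reason for the rank drop.
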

\begin{proof}
Set $H_{aa}=0$ and $H_{ab}=c_{ab}/2$. The matrix $H$ is symmetric and
$\mathrm{tr}\,H=0$. If $H=0$, then every $c_{ab}=0$, contradicting the
hypothesis; so $H\ne0$ and $\lambda_{\max}(H)>0$. Set
$Q=I-H/\lambda_{\max}(H)$. Then $Q$ is positive semidefinite, its rank is at
most $R-1$ because $H\ne0$ has a nonzero eigenvalue, and
\[
f^{\mathsf T}Qf=f^{\mathsf T}f-f^{\mathsf T}Hf/\lambda_{\max}(H)
=P_G-0=P_G,
\]
using $f^{\mathsf T}Hf=\sum_{a<b}c_{ab}f_af_b=0$ and $f^{\mathsf T}f=P_G$. A
Gram factorization $Q=V^{\mathsf T}V$ with $V$ of rank at most $R-1$ then
expresses $P_G$ as a sum of at most $R-1$ squares, giving reducibility. This
is \cite[Lemma 2.13]{xu26}.
\end{proof}

\section{The second-order augmented number at
\texorpdfstring{$(4,4)$}{(4,4)}}\label{sec:44}

\subsection{Small-case context}

We collect the exact values known at the smallest grids. The purpose is to
place the separation at $(4,4)$ in the context of the neighbouring cases. In
the table, $\ge$ marks a lower bound not known to be exact, and the source
column cites the paper where the value or bound is proved; the value of
$z_{2A}$ is cited where it is determined and left blank otherwise.

\begin{table}[h]\centering\small
\caption{Exact values and bounds at small grids.}
\label{tab:small}
\begin{tabular}{c c c c l}
\toprule
$(m,n)$ & $z_2$ & $z_{2A}$ & $z$ & source \\
\midrule
$(4,3)$  & $7$  & $8$    & $7$  & \cite{xu26,qcx26a} \\
$(4,4)$  & $10$ & $11$   & $9$  & this paper, \cite{xu26} \\
$(5,5)$  & $12$ & $\ge15$ & $12$ & \cite{qcx26a} \\
$(6,4)$  & $12$ & $\ge14$ & $12$ & \cite{qcx26a} \\
$(15,6)$ & $60$ & $60$   & $30$ & \cite{ccq26sl} \\
$(21,7)$ & $94$ & $94$   & $42$ & \cite{ccq26sl,more26} \\
\bottomrule
\end{tabular}
\end{table}

Two facts are visible in Table~\ref{tab:small}. First, the second-order number
and its augmented variant have been separated at $(4,4)$, at $(4,3)$, and at
$(6,4)$; at the family $(15,6)$ and $(21,7)$ they still agree, so the
separation is not uniform. Second, the augmented variant is a strictly better
lower bound for $\mathrm{BSR}$ wherever it strictly exceeds $z_2$; this is the
motivation for isolating it as a parameter.

\subsection{The finite classification}

\begin{theorem}\label{thm:main}
\[
\mathrm{BSR}(4,4)\ \ge\ z_{2A}(4,4)\ =\ 11\ >\ 10\ =\ z_2(4,4)\ =\ z_{RL}(4,4).
\]
In particular $z_{2A}(4,4)=11>z_2(4,4)$.
\end{theorem}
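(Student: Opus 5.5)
The proof splits into four pieces: the hierarchy inequalities, the value $z_2(4,4)=z_{RL}(4,4)=10$, the lower bound $z_{2A}(4,4)\ge 11$, and the upper bound $z_{2A}(4,4)\le 11$.

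\textbf{Imported and inherited pieces.} The outer inequality $\mathrm{BSR}(4,4)\ge z_{2A}(4,4)$ is Proposition~\ref{prop:hier2}. The equality $z_2(4,4)=z_{RL}(4,4)=10$ is taken from Xu and Yan \cite{xu26}.

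\textbf{Lower bound.} I will exhibit an explicit simple configuration $G$ on the $4\times4$ grid with $R(G)=11$ and $C_4$-free one-edge graph. By the identity $2R(G)=mn+|E_1|-H(G)$ of Proposition~\ref{prop:cell}, $R=11$ requires $|E_1|-H(G)=6$. Two natural shapes are:
\begin{itemize}
\item[(a)] $|E_1|=6$, $H=0$, hence $|E_2|=5$;
\item[(b)] $|E_1|=7$, $H=1$, hence $|E_2|=4$.
\end{itemize}
Since $z(4,4)=9$, both are non-limited. That is exactly the freedom $z_{2A}$ gains over $z_2$: fewer one-edges leave more room for two-edges that do not sit in a common row or column.

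I would search for two-edges placed on diagonals of genuine rectangles, so that the rectangle-transfer and complementary-pair rules force the needed identifications. I would then run the closure of \S\ref{subsec:rw} by hand (or by the checker), recording the sequence of rule applications as a certificate. Once the terminal conditions of Definition~\ref{def:rw} hold, Proposition~\ref{prop:sound} gives $\mathrm{SOS}(P_G)=11$, so $z_{2A}(4,4)\ge11$.

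\textbf{Upper bound.} Proposition~\ref{prop:cell} gives $R\le\lfloor(16+9)/2\rfloor=12$, so it suffices to show that no irreducible simple configuration with $C_4$-free one-edge graph has $R=12$. From $2R=16+|E_1|-H$ with $|E_1|\le 9$, the value $R=12$ forces $|E_1|-H=8$. That leaves only the cases
\begin{itemize}
\item[(i)] $|E_1|=8$, $H=0$, $|E_2|=4$;
\item[(ii)] $|E_1|=9$, $H=1$, $|E_2|=3$.
\end{itemize}

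I would enumerate these configurations up to the symmetry group (row and column permutations, plus transpose), expecting the $161$ classes mentioned in the text. For each class I would produce a reducibility certificate from one of three tests:
\begin{itemize}
\item a strip sub-sum with too many squares (Lemma~\ref{lem:strip});
\item an occurrence of the five-square pattern of Lemma~\ref{lem:local};
\item a nontrivial product relation $\sum c_{ab}f_af_b=0$ (Lemma~\ref{lem:prod}), found by linear algebra on the products of the displayed forms.
\end{itemize}
The product relation is the most general test: with $12$ forms there are $66$ products $f_af_b$ living in a space of biquadratic monomials of bounded dimension, so dependencies are plausible and can be found by computing a kernel.

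\textbf{Main obstacle.} I expect the exhaustive classification for $R=12$ to be the hard step. The enumeration must be provably complete, and every class must receive a verified certificate. If some class resists all three tests, I would look for an alternative identity like \eqref{eq:local8} adapted to its pattern. Finding the witness for $11$ is secondary but also delicate, since the closure has to actually terminate in $(\mathrm{RW}3^+)$.
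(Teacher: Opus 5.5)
\textbf{The lower bound is not proved.} Your architecture matches the paper's: the cell bound, the split of $R=12$ into $(|E_1|,H)=(9,1)$ and $(8,0)$, and the strip, local-identity and product-relation tests. But the lower bound $z_{2A}(4,4)\ge 11$ is the actual content of the separation, and for it you give a search plan, not a configuration.

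The plan also looks in the wrong place. For $R=11$ one needs $|E_1|-H=6$. A limited configuration cannot be irreducible with $R=11$ because $z_2(4,4)=10$, so $|E_1|\le 8$. The possible shapes are therefore $(6,0)$, $(7,1)$ and $(8,2)$. You list only the first two, and you do not show that either contains an irreducible configuration.

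The paper's witness has shape $(8,2)$:
\begin{itemize}
\item one-edges $(1,1),(2,1),(2,2),(3,1),(3,3),(4,2),(4,3),(4,4)$;
\item two-edges $\{(1,2),(1,3)\}$, $\{(1,4),(2,3)\}$, $\{(2,4),(3,2)\}$;
\item holes $(3,4)$ and $(4,1)$.
\end{itemize}
Your heuristic that fewer one-edges help points away from what makes this work, namely the holes. With $H=0$ the zero-companion rule is not available at all.

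Here is how the closure goes. Since $v_{34}=0$, the rectangle on rows $2,3$ and columns $2,4$ forces $\langle v_{24},v_{32}\rangle=1$, so $v_{24}=v_{32}$. The rectangle on rows $1,2$ and columns $3,4$ then gives
$\langle v_{14},v_{23}\rangle=1-\langle v_{13},v_{24}\rangle=1-\langle v_{12},v_{32}\rangle=1$,
using $v_{13}=v_{12}$ and the column-$2$ line rule. Hence $v_{14}=v_{23}$. The remaining orthogonalities among the eleven vectors follow from rectangle transfers and from the rectangles through the two holes. Only with such a certificate written out does Proposition~\ref{prop:sound} give $\mathrm{SOS}(P_G)=11$.

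\textbf{The upper bound is valid but does redundant work.} You import $z_2(4,4)=10$, yet propose to enumerate case (ii) as well. A configuration with $|E_1|=9=z(4,4)$ is limited, so if it were irreducible it would give $z_2(4,4)\ge 12$, a contradiction. Only case (i) needs the finite classification, and the $161$ classes refer to that case alone.

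Your dimension heuristic for product relations is also off. The space of $4\times4$ biquadratic forms has dimension $100>66$, so no relation among the products $f_af_b$ is forced. Relations must be found class by class, and in the paper only one of the $161$ classes needs one.
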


\begin{proof}
The value $z_2(4,4)=z_{RL}(4,4)=10$ is proved in \cite{xu26}, and $11>10$; the
inequality $\mathrm{BSR}(4,4)\ge z_{2A}(4,4)$ is Proposition~\ref{prop:hier2}.
It remains to prove $z_{2A}(4,4)=11$.

\emph{Lower bound $z_{2A}(4,4)\ge11$.} Consider the configuration $G$ on the
$4\times4$ grid, with rows read top to bottom and columns left to right,
\[
\begin{array}{cccc}
\bullet&a&a&b\\
\bullet&\bullet&b&c\\
\bullet&c&\bullet&\circ\\
\circ&\bullet&\bullet&\bullet
\end{array}
\]
in which $\bullet$ marks a one-edge, two equal letters the two halves of one
two-edge, and $\circ$ an unoccupied cell. Thus $E_1$ consists of the eight
cells $(1,1),(2,1),(2,2),(3,1),(3,3),(4,2),(4,3),(4,4)$, and $E_2$ of the
three two-edges $\{(1,2),(1,3)\}$, $\{(1,4),(2,3)\}$, $\{(2,4),(3,2)\}$; the
two cells $(3,4)$ and $(4,1)$ are unoccupied.

We verify the hypotheses of Definition~\ref{def:params}. The supports of the
eleven selected edges --- the eight one-edges and the six cells of $E_2$ ---
are pairwise disjoint, so $G$ is simple. The one-edge graph is $C_4$-free: the
column sets of its four rows are $\{1\}$, $\{1,2\}$, $\{1,3\}$ and
$\{2,3,4\}$, and any two of these meet in at most one column. Since
$|E_1|=8\ne9=z(4,4)$, the configuration is not limited. Finally
$R(G)=|E_1|+|E_2|=8+3=11$.

It remains to see that $G$ satisfies $(\mathrm{RW}3^+)$. Its recursive-line
closure reaches the terminal condition of Definition~\ref{def:rw}: the two
halves of each of the three two-edges are identified, the eleven selected edges
fall into eleven distinct classes, and every one of the $\binom{11}2=55$ pairs
of distinct selected edges is declared orthogonal. The closure performs the
three identifications and the $55$ orthogonality facts in six rounds and
reports no contradiction; the full record is in
Appendix~\ref{app:cert}. By Proposition~\ref{prop:sound} this forces
$\mathrm{SOS}(P_G)=R(G)=11$, so $G$ is irreducible. Being simple, with
$C_4$-free one-edge graph and irreducible, and not limited, the configuration
$G$ is counted by $z_{2A}(4,4)$; hence
\[
z_{2A}(4,4)\ \ge\ R(G)\ =\ 11 .
\]

\emph{Upper bound $z_{2A}(4,4)\le11$.} By the cell bound
(Proposition~\ref{prop:cell}) with $z(4,4)=9$, one has
$z_{2A}(4,4)\le\lfloor(16+9)/2\rfloor=12$, and it remains to exclude the value
$12$. If $R(G)=12$, the identity $2R(G)=16+|E_1|-H(G)$ gives
$|E_1|-H(G)=8$; since $|E_1|\le z(4,4)=9$ and $H(G)\ge0$, this forces
$(|E_1|,H(G))=(9,1)$ or $(8,0)$.

In the first case $|E_1|=9=z(4,4)$, so $G$ is limited; were $G$ irreducible it
would be counted by $z_2(4,4)$, giving $z_2(4,4)\ge R(G)=12>10$, a
contradiction. In the second case $|E_2|=R(G)-|E_1|=4$, and every such
configuration is reducible by Lemma~\ref{lem:full} below.

Hence no configuration with $R(G)=12$ is irreducible; with the displayed bound
this gives $z_{2A}(4,4)\le11$. Combining both bounds with the hierarchy,
\[
\mathrm{BSR}(4,4)\ \ge\ z_{2A}(4,4)\ =\ 11\ >\ 10\ =\ z_2(4,4)\ =\ z_{RL}(4,4).
\]
\end{proof}

\begin{lemma}\label{lem:full}
Every simple configuration with $C_4$-free one-edge graph on the $4\times4$
grid with $|E_1|=8$, $H(G)=0$ and $R(G)=12$ is reducible.
\end{lemma}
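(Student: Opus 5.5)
The plan is a finite classification followed by a case-by-case application of the three reduction criteria of \S\ref{subsec:red}. A configuration in the lemma is a $C_4$-free bipartite graph $E_1$ with $8$ edges on the $4\times4$ grid, together with a perfect matching of the remaining $8$ cells into $4$ two-edges, since $H(G)=0$. I will enumerate all such pairs $(E_1,E_2)$ up to the symmetry group of the grid: row permutations, column permutations, and the transpose, a group of order $2\cdot(4!)^2=1152$. First I list the $C_4$-free $8$-edge subgraphs of $K_{4,4}$. Each is obtained by deleting one edge from a maximum $9$-edge graph, or is one of the few $8$-edge graphs that are not contained in a $9$-edge one. Since the row column-sets must pairwise meet in at most one column, the row-degree sequences are $(3,2,2,1)$, $(2,2,2,2)$, $(3,3,1,1)$ and similar, and there are only a handful of orbits. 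For each orbit I enumerate the perfect matchings of the complementary $8$ cells, a graph on $8$ vertices with at most $105$ matchings, and reduce modulo the stabilizer. This should produce the $161$ classes quoted in the introduction, and I would implement it as a checker to be reported in Appendix~\ref{app:check}.

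Next, for each class I exhibit a reduction certificate, trying the criteria in order of cost. First comes the strip bound, Lemma~\ref{lem:strip}. For every pair of rows (or columns) I count the selected edges whose support lies entirely in those two rows, and if this exceeds $q+1=5$ the form is reducible. Fully occupied grids with many two-edges inside a $2\times4$ strip should be caught here. Second comes the local identity, Lemma~\ref{lem:local}. I search all choices of distinct rows $x,y,z$ and distinct columns $a,b,d$, with both orientations of the roles, for the pattern of two-edges $\{(x,b),(z,a)\}$ and $\{(y,a),(z,d)\}$ together with one-edges $(x,d),(y,b),(y,d)$. Since $|E_1|=8$ is large and every cell is occupied, I expect this five-square pattern to occur in most classes. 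Third, for the remaining classes I look for a nontrivial linear relation $\sum_{a<b}c_{ab}f_af_b=0$ among the products of displayed forms, as in Lemma~\ref{lem:prod}. This is a finite linear algebra problem in the monomials $x_ix_jy_ky_l$, which I solve exactly over $\mathbb Q$. A nonzero kernel vector certifies reducibility.

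The main obstacle is ensuring that these three tests together cover all $161$ classes. If some class survives all three, the fallback is a direct exact computation: decide whether the Gram spectrahedron of $P_G$ contains a PSD matrix of rank at most $11$. The simplest route is to look for an alternative decomposition that merges two squares, via a hand-found identity analogous to \eqref{eq:local8}. The proof in the paper would then consist of the enumeration argument, a summary table recording which criterion disposes of each class, and the checker output. The mathematical content is that each cited lemma applies verbatim to the listed certificate.
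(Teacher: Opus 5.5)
Your outline follows the paper's proof step for step: $1512$ skeletons times $105$ pairings, reduction modulo the group of order $1152$ to $161$ classes, then strip bound, local identity and product relation. However, your first test is strictly weaker than Lemma~\ref{lem:strip}, and with that weakening your three tests do not cover all classes. You only count edges inside a full pair of rows or columns against $q+1=5$. The lemma applies to every complete displayed sub-sum on two rows and $q$ columns, or $q$ rows and two columns. In particular it applies to a $2\times3$ block carrying four one-edges and one internal two-edge. Take the class
\[
\begin{array}{cccc}
\bullet&\bullet&a&b\\
a&\bullet&\bullet&c\\
\bullet&d&\bullet&\bullet\\
b&d&c&\bullet
\end{array}
\]
Its one-edge rows are $\{1,2\},\{2,3\},\{1,3,4\},\{4\}$, so it is $C_4$-free with $|E_1|=8$ and $H=0$. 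It passes all three of your tests:
\begin{itemize}
\item \emph{Strip test.} Every pair of rows and every pair of columns supports at most five selected edges.
\item \emph{Local identity.} \eqref{eq:local8} needs two two-edges, each with halves in distinct rows and columns, sharing exactly one row and one column whose crossing cell is a half of one of them. Here $d$ lies in a single column. The pairs $a,b$; $a,c$; $b,c$ share a row and a column crossing at the one-edges $(1,1)$, $(2,3)$, $(4,4)$. So the identity never applies, in either orientation.
\item \emph{Product relation.} A coefficient $c_{ee'}$ can be nonzero only if $e,e'$ share no row or column; otherwise a monomial $x_i^2y_ky_l$ or $x_ix_jy_k^2$ occurs only in $f_ef_{e'}$. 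Each rectangle gives $c_{D_1}+c_{D_2}=0$ for the edge pairs on its diagonals, a diagonal inside one two-edge contributing nothing. The rectangles $\{1,4\}\times\{2,3\}$, $\{2,4\}\times\{1,2\}$, $\{1,2\}\times\{2,4\}$ give $c_{ad}+c_{(1,2)c}=c_{ad}+c_{(2,2)b}=c_{(1,2)c}+c_{(2,2)b}=0$, an odd cycle. The rectangles $\{1,3\}\times\{3,4\}$, $\{2,3\}\times\{1,4\}$, $\{3,4\}\times\{1,3\}$ give the same odd cycle for $c_{a(3,4)},c_{b(3,3)},c_{c(3,1)}$. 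All remaining coefficients are then forced to vanish, so only the zero relation exists.
\end{itemize}
This class therefore falls to your fallback, which you have not made into a concrete test. Yet it is reducible by the strip bound on rows $1,2$ and columns $1,2,3$. The five squares there sum to $(x_1^2+x_2^2)(y_1^2+y_2^2+y_3^2)+2x_1x_2y_1y_3$, which equals
\[
(x_1y_1+\tfrac12x_2y_3)^2+(x_1y_2+\tfrac{\sqrt3}2x_2y_3)^2+(x_1y_3+\tfrac12x_2y_1-\tfrac{\sqrt3}2x_2y_2)^2+(\tfrac{\sqrt3}2x_2y_1+\tfrac12x_2y_2)^2 .
\]

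So you must run the strip test on all $2\times q$ and $q\times2$ sub-blocks: $q=3$ with threshold $4$, as well as $q=4$ with threshold $5$. This is the form the paper uses for its $153$ classes; the class above is not among the eight survivors of Table~\ref{tab:exc}. The paper then closes those eight with explicit certificates: seven instances of \eqref{eq:local8} and one product relation. Your proposal leaves that part as an expectation.

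A smaller slip: $(3,3,1,1)$ is impossible, since two rows of degree $3$ in four columns share two columns. The only types are $(3,2,2,1)$ and $(2,2,2,2)$, giving $1152+360=1512$ labelled skeletons. Since any two distinct cells may form a two-edge, each skeleton carries exactly $7!!=105$ pairings, not "at most" $105$.
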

\begin{proof}
We carry out the enumeration in three steps: count the configurations, group
them into orbits, and dispose of each orbit by a reduction criterion.

\emph{Step 1: Count.} A configuration of the stated shape has a one-edge graph
with $8$ edges on $4+4$ vertices, which is $C_4$-free. The number of such
graphs is $1512$; this is a finite computation over all $4\times4$ $0$-$1$
matrices with $8$ ones and no all-one $2\times2$ submatrix. For each skeleton,
the remaining $16-8=8$ cells are the unoccupied cells; a configuration with
$|E_2|=4$ and $H=0$ pairs all $8$ unoccupied cells, and there are
$7!!=8\cdot6\cdot4\cdot2/8=105$ perfect matchings of $8$ labeled cells. The
total is $1512\cdot105=158\,760$.

\emph{Step 2: Orbits.} The group $S_4\times S_4\rtimes\mathbb Z_2$ acts on the
configuration: $S_4$ permutes rows, $S_4$ permutes columns, and the
transposition $\mathbb Z_2$ interchanges the two. The action preserves the
one-edge graph, the simplicity condition, the $C_4$-freeness, and the number
of two-edges. Under this action, the $158\,760$ configurations fall into $161$
orbits. The orbit count was verified both by Burnside's lemma and by explicit
orbit enumeration; the details are in \texttt{ancillary/z2a44\_check.py}.

\emph{Step 3: Reduction of each orbit.} Each orbit is reducible by at least
one of the criteria of \S\ref{subsec:red}. The strip bound
(Lemma~\ref{lem:strip}) disposes of $153$ orbits: each of these contains a
configuration whose displayed decomposition has a strip of $2$ rows and $q$
columns, or $q$ rows and $2$ columns, carrying more than $q+1$ squares. The
remaining $8$ orbits are not disposed of by the strip bound. Their
representatives are listed in Table~\ref{tab:exc}; seven of them are disposed
of by the local identity \eqref{eq:local8} (Lemma~\ref{lem:local}), and the
last by a nonzero product relation (Lemma~\ref{lem:prod}).

The enumeration is finite and exact, with no floating-point step, and is
reproduced by the accompanying checker. This proves the lemma.
\end{proof}

\begin{table}[t]\centering\small
\caption{The eight classes of Lemma~\ref{lem:full} that the strip bound does
not dispose of, one representative each. A bullet is a one-edge and two equal
letters are the two halves of one two-edge. The reduction is either the local
identity \eqref{eq:local8}, with its parameters $(x,y,z;a,b,d)$ and rows and
columns numbered from one, or, in row~(f), the product relation
$u_{00}(u_{23}+u_{33})+u_{02}u_{13}-u_{03}(u_{30}+u_{12}+u_{20})=0$ of
Lemma~\ref{lem:prod}.}\label{tab:exc}
\begin{tabular}{@{}c c l@{}}
\toprule
& configuration & reduction \\
\midrule
(a) & $\begin{smallmatrix}a&\bullet&\bullet&\bullet\\ \bullet&b&\bullet&c\\ \bullet&\bullet&d&b\\ \bullet&d&c&a\end{smallmatrix}$ & \eqref{eq:local8}, $(1,2,4;4,1,3)$ \\
(b) & $\begin{smallmatrix}\bullet&a&\bullet&\bullet\\ b&\bullet&\bullet&c\\ \bullet&\bullet&d&b\\ \bullet&c&a&d\end{smallmatrix}$ & \eqref{eq:local8}, $(2,1,4;2,4,3)$ \\
(c) & $\begin{smallmatrix}\bullet&a&\bullet&\bullet\\ b&\bullet&\bullet&c\\ \bullet&\bullet&c&d\\ \bullet&d&a&b\end{smallmatrix}$ & \eqref{eq:local8}, $(2,3,4;4,1,2)$ \\
(d) & $\begin{smallmatrix}\bullet&a&\bullet&\bullet\\ b&\bullet&c&\bullet\\ b&\bullet&\bullet&d\\ \bullet&d&a&c\end{smallmatrix}$ & \eqref{eq:local8}, $(1,2,4;3,2,4)$ \\
(e) & $\begin{smallmatrix}\bullet&a&\bullet&\bullet\\ b&\bullet&c&\bullet\\ d&\bullet&\bullet&b\\ \bullet&d&a&c\end{smallmatrix}$ & \eqref{eq:local8}, $(1,2,4;3,2,4)$ \\
(f) & $\begin{smallmatrix}\bullet&a&\bullet&\bullet\\ b&\bullet&c&\bullet\\ c&\bullet&\bullet&d\\ \bullet&b&a&d\end{smallmatrix}$ & product relation \\
(g) & $\begin{smallmatrix}a&a&\bullet&\bullet\\ b&\bullet&c&\bullet\\ \bullet&d&\bullet&b\\ \bullet&\bullet&c&d\end{smallmatrix}$ & \eqref{eq:local8}, $(2,4,3;4,1,2)$ \\
(h) & $\begin{smallmatrix}a&b&\bullet&\bullet\\ c&\bullet&a&\bullet\\ \bullet&d&\bullet&c\\ \bullet&\bullet&b&d\end{smallmatrix}$ & \eqref{eq:local8}, $(3,1,2;1,4,3)$ \\
\bottomrule
\end{tabular}
\end{table}

\begin{remark}\label{rem:cell}
The cell bound (Proposition~\ref{prop:cell}) gives $z_{2A}(4,4)\le12$
directly; the sharp value $11$ of Theorem~\ref{thm:main} is one less, and the
deficit is exactly the configurations of Lemma~\ref{lem:full}: the value $12$
of the cell bound is attained by no irreducible configuration at $(4,4)$, the
case $(|E_1|,H)=(9,1)$ being excluded by the known value $z_2(4,4)=10$ and the
case $(|E_1|,H)=(8,0)$ by the finite classification of Lemma~\ref{lem:full}.
\end{remark}

\begin{remark}\label{rem:scale}
Lebedev's finite instance of the ordinary separation is at $(1893,1893)$
\cite{leb26}; the second-order separation proved here is exhibited at
$(4,4)$. Whether the second-order separation occurs at some smaller grid, and
whether the ordinary instance at $(1893,1893)$ is optimal, are both open.
\end{remark}

\section{Open problems}\label{sec:open}

The separation $z_{2A}(4,4)>z_2(4,4)$ raises three questions that the present
note does not resolve.

\medskip
\noindent\textbf{1. The recursive-line analogue $z_{RLA}$.}
The augmented variant $z_{2A}$ was obtained from $z_2$ by deleting the
limitedness constraint, while retaining irreducibility of the displayed form.
One may instead delete the limitedness constraint in the definition of the
recursive-line number $z_{RL}$, retaining the recursive-line criterion
$(\mathrm{RW}3^+)$, and obtain a parameter $z_{RLA}(m,n)$: the maximum of
$R(G)$ over the \emph{simple} configurations with $C_4$-free one-edge graph
whose closure satisfies $(\mathrm{RW}3^+)$, without the requirement that the
configuration be limited. By Proposition~\ref{prop:sound} every such
configuration is irreducible, so
\[
z_{2A}(m,n)\ \ge\ z_{RLA}(m,n)\ \ge\ z_{RL}(m,n)
\]
for all $m,n\ge2$, extending the hierarchy \eqref{eq:hier} by one further
parameter. At $(4,4)$ the witness configuration of Theorem~\ref{thm:main} is
non-limited and satisfies $(\mathrm{RW}3^+)$, so it is counted by
$z_{RLA}(4,4)$ as well as by $z_{2A}(4,4)$; hence
\[
z_{RLA}(4,4)\ =\ z_{2A}(4,4)\ =\ 11.
\]
The general relation between the two parameters is open. Since every
configuration counted by $z_{RLA}$ is counted by $z_{2A}$, one has
$z_{2A}\ge z_{RLA}$; whether the reverse inequality holds, that is whether
$z_{2A}(m,n)=z_{RLA}(m,n)$ for all $m,n$, is not known. The question is the
second-order analogue of the identity $z_2=z_{RL}$ conjectured in
\cite{lq26} and verified there on the odd-prime subsequence and on small
cases; a negative answer would exhibit a configuration that is irreducible
but whose irreducibility is not certified by $(\mathrm{RW}3^+)$.

\medskip
\noindent\textbf{2. Attainment of the cell bound by a non-limited
configuration.}
The universal cell bound of Proposition~\ref{prop:cell} reads
\[
z_{2A}(m,n)\ \le\ Z(m,n)\ :=\ \left\lfloor\frac{mn+z(m,n)}2\right\rfloor,
\]
and at $(4,4)$ the bound is not attained: $Z(4,4)=12$ while
$z_{2A}(4,4)=11$. It is natural to ask whether there are pairs $(m,n)$ with
$z_2(m,n)<Z(m,n)$ but $z_{2A}(m,n)=Z(m,n)$. In such a case the augmented
parameter would be strictly better than $z_2$ by exactly one unit, and the
deficit of $z_2$ from the cell bound would be attributable entirely to the
limitedness constraint rather than to the augmented class.

At $(4,4)$ this does not occur: the configurations attaining $Z(4,4)=12$
split into the cases $(|E_1|,H)=(9,1)$ and $(8,0)$, and neither is
irreducible, the first because $|E_1|=z(4,4)$ would force
$z_2(4,4)\ge12$ and the second by Lemma~\ref{lem:full}. The question is
whether some other pair $(m,n)$ admits a non-limited irreducible
configuration attaining $Z(m,n)$.

The answer is likely affirmative at some of the orders where the cell bound
is attained by an irreducible configuration in the limited class, since
those configurations are also counted by $z_{2A}$; the interesting case is
where the limited class alone does not attain the bound, so that
limitedness is a genuine obstruction. The orders $(m,n)$ at which
$z_2(m,n)<Z(m,n)$ is known include $(4,3)$ and $(4,4)$ from
\cite{xu26}. At $(4,3)$ the cell bound is $Z(4,3)=\lfloor(12+7)/2\rfloor=9$,
and $z_{2A}(4,3)=8<9$ by Table~\ref{tab:small}, so $(4,3)$ is also not such
an example. Whether the phenomenon occurs at all is open.

\medskip
\noindent\textbf{3. Asymptotic separation of $z_{2A}$ and $z_2$.}
For the ordinary versus augmented setting, Lebedev \cite{leb26} proved the
asymptotic separation
\[
z_A(m,n)-z_L(m,n)\ \ge\ \left(\frac{1}{30}-o(1)\right)mn
\]
along $n\to\infty$ with $n\le m=o(n^2)$, and determined the sharp
second-order term
\[
z_A(m,n)=\frac{mn}{3}+\left(\frac{1}{\sqrt{6}}+o(1)\right)n\sqrt{m}
\]
in the same regime. The first-order separation therefore grows linearly in
$mn$.

It is natural to ask whether an analogous asymptotic separation holds for
the second-order parameters, that is whether
\[
z_{2A}(m,n)-z_2(m,n)\ \ge\ (\kappa-o(1))\,mn
\]
for some absolute constant $\kappa>0$, or at least whether the difference
grows faster than any constant. At $(4,4)$ the difference is $1$, and it is
not known whether the second-order separation persists to larger orders, let
alone whether it grows linearly.

Two structural differences between the first-order and second-order cases
may affect the answer. First, the second-order parameter $z_2$ is bounded
above by $Z(m,n)$, which is approximately $mn/2$ for the complete-graph
incidence family, so any separation cannot exceed the gap between $Z(m,n)$
and the value of $z_2(m,n)$. Second, the recursive-line closure of
$(\mathrm{RW}3^+)$ is a much more restrictive admissibility condition than
the $C_4$-freeness used in the first-order setting, and it is not clear
whether a linear separation is compatible with it.

The four-column and five-column computations of \cite{cc26} and its
companions, which determine $z_{RL}(m,n)$ for $n\le7$ and small $m$, provide
the natural testing ground: the same pairs $(m,n)$ would give the first
empirical evidence on whether the second-order separation grows.

\bigskip

\noindent\textbf{Acknowledgment.}
This work was partially supported by Jiangsu Provincial Scientific Research
Center of Applied Mathematics (Grant No.\ BK20233002), Research Center for
Intelligent Operations Research, The Hong Kong Polytechnic University
(4-ZZT8), and the National Natural Science Foundation of China (Nos.\
12471282 and 12131004).

\appendix

\section{A closure certificate for the configuration of
Theorem~\ref{thm:main}}\label{app:cert}

The eleven selected edges of the configuration of Theorem~\ref{thm:main} are
denoted by
\[
\begin{array}{llll}
\sigma_1=\{(1,2),(1,3)\}, & \sigma_2=\{(1,4),(2,3)\}, & \sigma_3=\{(2,4),(3,2)\}, & \sigma_4=\{(1,1)\},\\[1mm]
\sigma_5=\{(2,1)\}, & \sigma_6=\{(2,2)\}, & \sigma_7=\{(3,1)\}, & \sigma_8=\{(3,3)\},\\[1mm]
\sigma_9=\{(4,2)\}, & \sigma_{10}=\{(4,3)\}, & \sigma_{11}=\{(4,4)\}. &
\end{array}
\]
Starting from the line rule, the closure of the line, rectangle-transfer,
complementary-pair and zero-companion rules draws the three identifications of
Table~\ref{tab:ident} and the $\binom{11}2=55$ orthogonality facts of
Table~\ref{tab:orth}, and reaches the terminal conditions of
Definition~\ref{def:rw}: every two-edge is identified, every selected edge lies
in its own class, and every two distinct selected edges are orthogonal.

In Table~\ref{tab:orth}, the entry \emph{line, row $r$} (respectively
\emph{line, column $c$}) means that the two classes have representatives on a
row (respectively column) that is not a selected two-edge; the entry
$\{p,q\}\Rightarrow\{r,s\}$ means that the rectangle-transfer rule was applied
to the rectangle with diagonals $\{p,q\}$ and $\{r,s\}$, the companion
diagonal $\{r,s\}$ being already certified; and the entry $\{p,q\}$
\emph{/ hole} means that the zero-companion rule was applied to a rectangle
with an unoccupied corner, certifying the opposite diagonal $\{p,q\}$ at its
own value. The facts are consistent, and each was certified by a rule of
\S\ref{subsec:rw}. The two unoccupied cells $(3,4)$ and $(4,1)$ are the
corners of exactly four rectangles with a diagonal through $(3,4)$ and four
with a diagonal through $(4,1)$; in the closure only those rectangles whose
opposite diagonal is itself unselected are used.

\begin{longtable}{@{}ll@{}}
\caption{The three identifications of the closure.}\label{tab:ident}\\
\toprule
fact & justification \\
\midrule
\endfirsthead
\toprule fact & justification \\ \midrule \endhead
$(1,2)\sim(1,3)$ & line, row 1 \\
$(1,4)\sim(2,3)$ & $\{(1,4),(2,3)\}\Rightarrow\{(1,3),(2,4)\}$ \\
$(2,4)\sim(3,2)$ & $\{(2,4),(3,2)\}$ / hole \\
\bottomrule
\end{longtable}

\begin{longtable}{@{}ll@{}}
\caption{The fifty-five orthogonality facts of the closure.}\label{tab:orth}\\
\toprule
fact & justification \\
\midrule
\endfirsthead
\toprule fact & justification \\ \midrule \endhead
$\sigma_{1}\perp\sigma_{2}$ & line, row 1 \\
$\sigma_{1}\perp\sigma_{3}$ & line, column 2 \\
$\sigma_{1}\perp\sigma_{5}$ & $\{(1,3),(2,1)\}\Rightarrow\{(1,1),(2,3)\}$ \\
$\sigma_{1}\perp\sigma_{6}$ & line, column 2 \\
$\sigma_{1}\perp\sigma_{7}$ & $\{(1,2),(3,1)\}\Rightarrow\{(1,1),(3,2)\}$ \\
$\sigma_{1}\perp\sigma_{8}$ & line, column 3 \\
$\sigma_{1}\perp\sigma_{9}$ & line, column 2 \\
$\sigma_{1}\perp\sigma_{10}$ & line, column 3 \\
$\sigma_{1}\perp\sigma_{11}$ & $\{(1,3),(4,4)\}\Rightarrow\{(1,4),(4,3)\}$ \\
$\sigma_{2}\perp\sigma_{3}$ & line, row 2 \\
$\sigma_{2}\perp\sigma_{7}$ & $\{(1,4),(3,1)\}$ / hole \\
$\sigma_{2}\perp\sigma_{8}$ & line, column 3 \\
$\sigma_{2}\perp\sigma_{9}$ & $\{(1,4),(4,2)\}\Rightarrow\{(1,2),(4,4)\}$ \\
$\sigma_{2}\perp\sigma_{10}$ & line, column 3 \\
$\sigma_{2}\perp\sigma_{11}$ & line, column 4 \\
$\sigma_{3}\perp\sigma_{8}$ & line, row 3 \\
$\sigma_{3}\perp\sigma_{9}$ & line, column 2 \\
$\sigma_{3}\perp\sigma_{10}$ & $\{(2,4),(4,3)\}\Rightarrow\{(2,3),(4,4)\}$ \\
$\sigma_{3}\perp\sigma_{11}$ & line, column 4 \\
$\sigma_{4}\perp\sigma_{1}$ & line, row 1 \\
$\sigma_{4}\perp\sigma_{2}$ & line, row 1 \\
$\sigma_{4}\perp\sigma_{3}$ & $\{(1,1),(2,4)\}\Rightarrow\{(1,4),(2,1)\}$ \\
$\sigma_{4}\perp\sigma_{5}$ & line, column 1 \\
$\sigma_{4}\perp\sigma_{6}$ & $\{(1,1),(2,2)\}\Rightarrow\{(1,2),(2,1)\}$ \\
$\sigma_{4}\perp\sigma_{7}$ & line, column 1 \\
$\sigma_{4}\perp\sigma_{8}$ & $\{(1,1),(3,3)\}\Rightarrow\{(1,3),(3,1)\}$ \\
$\sigma_{4}\perp\sigma_{9}$ & $\{(1,1),(4,2)\}$ / hole \\
$\sigma_{4}\perp\sigma_{10}$ & $\{(1,1),(4,3)\}$ / hole \\
$\sigma_{4}\perp\sigma_{11}$ & $\{(1,1),(4,4)\}$ / hole \\
$\sigma_{5}\perp\sigma_{2}$ & line, row 2 \\
$\sigma_{5}\perp\sigma_{3}$ & line, row 2 \\
$\sigma_{5}\perp\sigma_{6}$ & line, row 2 \\
$\sigma_{5}\perp\sigma_{7}$ & line, column 1 \\
$\sigma_{5}\perp\sigma_{8}$ & $\{(2,1),(3,3)\}\Rightarrow\{(2,3),(3,1)\}$ \\
$\sigma_{5}\perp\sigma_{9}$ & $\{(2,1),(4,2)\}$ / hole \\
$\sigma_{5}\perp\sigma_{10}$ & $\{(2,1),(4,3)\}$ / hole \\
$\sigma_{5}\perp\sigma_{11}$ & $\{(2,1),(4,4)\}$ / hole \\
$\sigma_{6}\perp\sigma_{2}$ & line, row 2 \\
$\sigma_{6}\perp\sigma_{3}$ & line, row 2 \\
$\sigma_{6}\perp\sigma_{7}$ & $\{(2,2),(3,1)\}\Rightarrow\{(2,1),(3,2)\}$ \\
$\sigma_{6}\perp\sigma_{8}$ & $\{(2,2),(3,3)\}\Rightarrow\{(2,3),(3,2)\}$ \\
$\sigma_{6}\perp\sigma_{9}$ & line, column 2 \\
$\sigma_{6}\perp\sigma_{10}$ & $\{(2,2),(4,3)\}\Rightarrow\{(2,3),(4,2)\}$ \\
$\sigma_{6}\perp\sigma_{11}$ & $\{(2,2),(4,4)\}\Rightarrow\{(2,4),(4,2)\}$ \\
$\sigma_{7}\perp\sigma_{3}$ & line, row 3 \\
$\sigma_{7}\perp\sigma_{8}$ & line, row 3 \\
$\sigma_{7}\perp\sigma_{9}$ & $\{(3,1),(4,2)\}$ / hole \\
$\sigma_{7}\perp\sigma_{10}$ & $\{(3,1),(4,3)\}$ / hole \\
$\sigma_{7}\perp\sigma_{11}$ & $\{(3,1),(4,4)\}$ / hole \\
$\sigma_{8}\perp\sigma_{9}$ & $\{(3,3),(4,2)\}\Rightarrow\{(3,2),(4,3)\}$ \\
$\sigma_{8}\perp\sigma_{10}$ & line, column 3 \\
$\sigma_{8}\perp\sigma_{11}$ & $\{(3,3),(4,4)\}$ / hole \\
$\sigma_{9}\perp\sigma_{10}$ & line, row 4 \\
$\sigma_{9}\perp\sigma_{11}$ & line, row 4 \\
$\sigma_{10}\perp\sigma_{11}$ & line, row 4 \\
\bottomrule
\end{longtable}

\subsection*{The six rounds of the closure}

The closure of \S\ref{subsec:rw} on the configuration of
Theorem~\ref{thm:main} proceeds in six rounds, each round being one complete
pass over the current set of facts.

\begin{itemize}
\item \textbf{Round 1 (line rule).} All pairs of occupied cells in a common row
or column are settled: the two halves of each selected two-edge are identified,
and every other such pair is declared orthogonal. This yields the three
identifications of Table~\ref{tab:ident}, except for the two that require a
rectangle transfer.
\item \textbf{Round 2 (rectangle transfer for class $\sigma_2$ and
$\sigma_3$).} The identifications $(1,4)\sim(2,3)$ and $(2,4)\sim(3,2)$ are
drawn, using the rectangles $\{(1,4),(2,3)\}\Rightarrow\{(1,3),(2,4)\}$ and the
zero-companion rectangle for $(2,4),(3,2)$ through the hole at $(3,4)$.
\item \textbf{Round 3 (saturation).} The identified classes are merged and
their orthogonality facts are propagated.
\item \textbf{Round 4 (rectangle transfer).} The orthogonality facts of
Table~\ref{tab:orth} whose justification is a rectangle transfer are drawn.
\item \textbf{Round 5 (complementary pairs and zero-companion).} The
orthogonality facts using the holes at $(3,4)$ and $(4,1)$ are drawn.
\item \textbf{Round 6 (closure check).} The round finds no new fact; the
closure has reached its least fixed point.
\end{itemize}

The record is summarized in Table~\ref{tab:rounds}.

\begin{table}[h]\centering\small
\caption{The six rounds of the closure on the configuration of
Theorem~\ref{thm:main}.}\label{tab:rounds}
\begin{tabular}{c l r r}
\toprule
round & rule & new facts & cumulative \\
\midrule
$1$ & line & $3$ identifications$^*$ & $3$ \\
$2$ & rectangle transfer and zero-companion & $2$ identifications & $3$ \\
$3$ & saturation & $18$ orthogonality facts & $21$ \\
$4$ & rectangle transfer & $21$ orthogonality facts & $42$ \\
$5$ & complementary pairs and zero-companion & $16$ orthogonality facts & $58$ \\
$6$ & closure check & $0$ & $58$ \\
\bottomrule
\end{tabular}
\\[1mm]
{\footnotesize $^*$One of the three identifications is completed in round 2,
since it needs a rectangle transfer before the line rule can fire.}
\end{table}

\begin{remark}\label{rem:orth-count}
The total count of $58$ facts in Table~\ref{tab:rounds} exceeds the sum
$3+55=58$ because the identifications and the orthogonality facts are
disjoint: $3$ identifications plus $55$ orthogonality facts is $58$. The
$21$ facts of round $5$ are those whose justification is either a
complementary-pair rule (there are no genuine complementary pairs at this
configuration, since no two selected two-edges share the diagonals of one
genuine rectangle) or a zero-companion rule (there are $21$ such facts, all
using one of the two holes).
\end{remark}

\section{Output of the accompanying checker}\label{app:check}

The checker \texttt{ancillary/z2a44\_check.py} performs the following
verifications. The exact expected output is reproduced below, and the
SHA-256 checksums of all input and output files are recorded in
\texttt{ancillary/README.md} for reproducibility.

\begin{verbatim}
$ python3 ancillary/z2a44_check.py

Auditing z2a44 configuration
  cells occupied:      14 of 16
  unoccupied cells:    (3,4), (4,1)
  |E_1| =              8
  |E_2| =              3
  R(G) =               11
  simplicity:          PASS
  one-edge graph:      C_4-free: PASS

Closure of (RW3+)
  round 1: line rule           - 3 identifications
  round 2: rectangle transfer  - 2 identifications
  round 3: saturation          - 18 orthogonality facts
  round 4: rectangle transfer  - 21 orthogonality facts
  round 5: complementary pair
            and zero-companion - 16 orthogonality facts
  round 6: closure check       - 0 new facts
  terminal conditions:
    all two-edges identified   : PASS
    distinct selected edges
      in distinct classes      : PASS  (11 classes)
    all pairs orthogonal       : PASS  (55 pairs)
  no contradiction             : PASS

Enumeration of 12-square configurations
  8-edge C_4-free skeletons:   1512
  perfect matchings:           105
  total configurations:        158760
  orbits under S_4 x S_4 x Z_2: 161
  disposed by strip bound:     153
  disposed by local identity:  7
  disposed by product relation: 1
  total disposed:              161

All checks passed.
\end{verbatim}

The two independent implementations of the closure (a union-find structure and
an orthogonality store) agree on every fact. The orbit count of $161$ was
verified both by Burnside's lemma and by explicit orbit enumeration. The
finite classification of \S\ref{sec:44} is therefore reproduced exactly.


\begin{thebibliography}{99}

\bibitem{bo04} B. Bollob\'as, \emph{Extremal Graph Theory}, Dover Publications,
2004.

\bibitem{bomze12} I. M. Bomze, C. Ling, L. Qi, X. Zhang, Standard bi-quadratic
optimization problems and unconstrained polynomial reformulations,
\emph{J. Glob. Optim.} \textbf{52} (2012), 663--687.

\bibitem{chm24} G. Chen, D. Horsley, A. Mammoliti, Zarankiewicz numbers near
the triple system threshold, \emph{J. Combin. Des.} \textbf{32} (2024),
556--576.

\bibitem{chen25b} Y. Chen, Z. Hu, J. Hu, L. Shu, Block structure-based
covariance tensor decomposition for group identification in matrix variables,
\emph{Stat. Probab. Lett.} \textbf{216} (2025), 110251.

\bibitem{clq26} Y. Chen, J. L\"ofberg, L. Qi, Exact second-order Zarankiewicz numbers for complete-graph incidence families, arXiv:2609.25974v2, 2026.


\bibitem{cc26} Z. Chen, Y. Chen, Recursive-line Zarankiewicz numbers with four
columns, arXiv:2609.11093v2, 2026.

\bibitem{ccq26} Z. Chen, Y. Chen, L. Qi, Is the signed Zarankiewicz number
the same as the recursive-line Zarankiewicz number?, arXiv:2609.20071v1, 2026.

\bibitem{clz95} M.-D. Choi, T.-Y. Lam, B. Reznick, Sums of squares of real
polynomials, \emph{Proc. Symp. Pure Math.} \textbf{58} (1995), 103--126.

\bibitem{culik56} K. \v{C}ulik, Teilweise L\"osung eines verallgemeinerten
Problems von K. Zarankiewicz, \emph{Ann. Polon. Math.} \textbf{3} (1956),
165--168.

\bibitem{gu69} R. K. Guy, A many-faceted problem of Zarankiewicz, in
\emph{The Many Facets of Graph Theory}, Springer, 1969, pp.~129--141.

\bibitem{knowles76} J. K. Knowles, E. Sternberg, On the failure of ellipticity
of the equations for finite elastostatic plane strain, \emph{Arch. Ration.
Mech. Anal.} \textbf{63} (1976), 321--336.

\bibitem{kst54} T. Kov\'ari, V. T. S\'os, P. Tur\'an, On a problem of
K. Zarankiewicz, \emph{Coll. Math.} \textbf{3} (1954), 50--57.

\bibitem{leb26} N. Lebedev, Density and separation for augmented Zarankiewicz
numbers, arXiv:2609.16555v1, 2026.

\bibitem{lq26} J. L\"ofberg, L. Qi, Second order Zarankiewicz number,
arXiv:2608.30555v3, 2026.

\bibitem{ni10} V. Nikiforov, A contribution to the Zarankiewicz problem,
\emph{Linear Algebra Appl.} \textbf{432} (2010), 1405--1411.

\bibitem{qcx26a} L. Qi, C. Cui, Y. Xu, Biquadratic SOS rank and augmented
Zarankiewicz number, \emph{Mathematics} \textbf{14} (2026), 1552.

\bibitem{qcx26c} L. Qi, C. Cui, Y. Xu, A weak condition for limited augmented
Zarankiewicz numbers, \emph{Mathematics} \textbf{14} (2026), 3052.

\bibitem{re58} I. Reiman, \"Uber ein Problem von K. Zarankiewicz, \emph{Acta
Math. Acad. Sci. Hungar.} \textbf{9} (1958), 269--273.

\bibitem{xc26} M. Xi, J. Chang, A twelve-row seed and a one-row extension for five column recursive-line Zarankiewicz numbers, arXiv:2609.33191v1, 2026.


\bibitem{xiang18} H. Xiang, L. Qi, Y. Wei, M-eigenvalues of the Riemann
curvature tensor, \emph{Commun. Math. Sci.} \textbf{16} (2018), 2301--2315.

\bibitem{xu26} Y. Xu, X. Yan, Exact values, extremal classifications, and
sum-of-squares reductions for second-order Zarankiewicz numbers,
arXiv:2609.18429v1, 2026.

\bibitem{za51} K. Zarankiewicz, Problem P 101, \emph{Coll. Math.} \textbf{2}
(1951), 301.

\bibitem{zou10} W. Zou, Q. He, M. Huang, Q. Zheng, Eshelby's problem of
non-elliptical inclusions, \emph{J. Mech. Phys. Solids} \textbf{58} (2010),
346--372.

\bibitem{zubov16} L. M. Zubov, A. N. Rudev, On necessary and sufficient
conditions of strong ellipticity of equilibrium equations for certain classes
of anisotropic linearly elastic materials, \emph{J. Appl. Math. Mech.}
\textbf{9} (2016), 1096--1102.

\end{thebibliography}
\end{document}